\newtheorem{theorem}{Theorem}[section]
\newtheorem{proposition}[theorem]{Proposition}
\theoremstyle{definition}
\newtheorem{definition}[theorem]{Definition}
\newtheorem{notation}{Notation}
\theoremstyle{remark}
\newtheorem{remark}[theorem]{Remark}
\numberwithin{equation}{section}
\begin{document}

\title[Min-max hypersurfaces]{
Orientability of min-max hypersurfaces in manifolds of positive Ricci curvature}
\author{Alejandra Ram\'irez-Luna}
\address{Departmento de Matem\'aticas, Universidad del Valle,‎ Calle 13 No. 100-00, Cali, Colombia.}
\email{maria.ramirez.luna@correounivalle.edu.co}
\thanks{The first author was supported by IMU and TWAS}

\date{July  29, 2019}

\dedicatory{}

\keywords{min-max hypersurface}

\begin{abstract}
Let $M^{n+1}$ be an orientable compact Riemannian manifold with positive Ricci curvature. We prove that the Almgren-Pitts width of $M^{n+1}$ is achieved by an orientable index $1$ minimal hypersurface with multiplicity $1$ and optimal regularity. This extends to dimensions $n+1\geq 8$ the results of Ketover-Marques-Neves \cite{ketover2016catenoid} and X. Zhou \cite{zhou2017min}.

\end{abstract}

\maketitle

\section{Introduction}
Let $M^{n+1}$ be an orientable compact Riemannian manifold. A problem of great interest is the search of minimal hypersurfaces in $ M^{n+1} $. 
The min-max theory is a method established by F. Almgren and J. Pitts to produce minimal hypersurfaces.  They showed the existence of a smooth, closed, embedded minimal hypersurface in a Riemannian manifold when $2\leq n \leq 5$ (\cite{almgren1965theory},  \cite{pitts2014existence}) . Schoen and Simon \cite{schoen1981regularity} generalized it to the case of dimension $n\geq 6$ (when $n\geq 7$ this hypersurface might have a singular set of Hausdorff codimension greater than or equal to $7$). 
Let $Z_{n}(M^{n+1})$ be the space of integral cycles and $\Phi:[0,1]\rightarrow Z_{n}(M^{n+1})$ (which will be denoted by $\{\Phi_{s}\}_{s=0}^{1}$) be a sweepout. Denote by $\Pi$ all the mappings $\Lambda$ which are homotopic to $\Phi$ in $Z_{n}(M^{n+1})$. We define the width of $M^{n+1}$ as\\

\begin{center}
$\displaystyle L(\Pi):=\inf_{\Lambda \in \Pi}\{\max_{x\in[0,1]}\mathcal{H}^{n}(\Lambda(x))\}$.
\end{center}

\begin{theorem}\cite{pitts2014existence}\cite{schoen1981regularity}
Under the above conditions $L(\Pi)>0$, and there exists a stationary integral varifold $V$, whose support is a disjoint collection of connected, closed, singular, minimal hypersurfaces $\{\Sigma_{i}\}_{i=1}^{l}$, with singular sets of Hausdorff dimension no larger than $n-7$ (which may have multiplicity $m_{i}\in \mathbb{N}$), such that $V=\sum_{i=1}^{l}m_{i}[\Sigma_{i}]$, and 
\begin{center}
$\displaystyle \sum_{i=1}^{l}m_{i}\mathcal{H}^{n}(\Sigma_{i})=L(\Pi).$
\end{center}
\end{theorem}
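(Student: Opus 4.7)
The plan is to follow the standard Almgren--Pitts min--max scheme, combining the existence part of \cite{pitts2014existence} with the Schoen--Simon regularity theorem \cite{schoen1981regularity}, which together give exactly the claimed structure. The argument has three main pieces: (i) $L(\Pi)>0$; (ii) production of a stationary integral varifold $V$ realising the width; (iii) regularity of the support up to codimension $7$.

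\emph{Step 1: positivity of the width.} I would start from the fact that $\Phi$ is a genuine sweepout, meaning $\Phi_0$ and $\Phi_1$ represent different relative classes in $Z_n(M^{n+1})$ (for instance, $\Phi_0=0$ and $\Phi_1=\partial[\![M]\!]$). The isoperimetric inequality on $M^{n+1}$ then forces some intermediate slice to have $\mathcal{H}^n(\Phi_{s_0})$ bounded below by a positive constant depending only on $M$: if every slice had very small mass, one could fill each slice with a chain of small volume and deform $\Phi$ to a constant map in $Z_n(M^{n+1})$, contradicting its nontriviality. Taking the infimum over $\Lambda\in\Pi$ preserves this lower bound, so $L(\Pi)>0$.

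\emph{Step 2: extracting a stationary varifold.} I would pick a min--max sequence $\{\Phi^i\}\subset\Pi$ with $\max_s\mathcal{H}^n(\Phi^i_s)\to L(\Pi)$, discretise it following Almgren--Pitts to obtain sequences of discrete sweepouts with fineness tending to zero, and apply the pull-tight deformation: using a continuous vector field on $Z_n(M^{n+1})$ that decreases mass away from the stationary set, I replace $\{\Phi^i\}$ by a sequence with the property that every slice $\Phi^i_{s_i}$ whose mass tends to $L(\Pi)$ has, up to subsequence, a stationary varifold limit $V$. A standard combinatorial/interpolation argument (Pitts' lemma on almost minimising varifolds) then guarantees that at least one such limit $V$ is also almost minimising in small annuli around every point. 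Rectifiability of $V$ and integrality of its density follow from Almgren's rectifiability theorem and Pitts' discrete argument, giving $V=\sum m_i[\Sigma_i]$ with $m_i\in\mathbb{N}$ and $\|V\|(M)=L(\Pi)$.

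\emph{Step 3: regularity.} For $2\le n\le 5$ the full smoothness of each $\Sigma_i$ is Pitts' regularity theorem. For $n\ge 6$ I would invoke Schoen--Simon \cite{schoen1981regularity}: an almost minimising stationary integral varifold is stable on the regular part, and their curvature estimates for stable minimal hypersurfaces give smoothness away from a closed singular set of Hausdorff dimension at most $n-7$. The connected components of the regular part then form the disjoint collection $\{\Sigma_i\}_{i=1}^{l}$, and the mass identity $\sum_i m_i\mathcal{H}^n(\Sigma_i)=L(\Pi)$ is immediate from $V=\sum_i m_i[\Sigma_i]$.

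\emph{Main obstacle.} The step I expect to be hardest (and the heart of Pitts' work) is the combinatorial passage from the pulled-tight min--max sequence to an almost minimising varifold, since it requires the delicate discrete deformation theorems and the interpolation results comparing discrete with continuous sweepouts. Once that is in place, everything else is either soft (Step 1) or black-boxed from \cite{schoen1981regularity} (Step 3).
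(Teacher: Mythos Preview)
The paper does not give its own proof of this statement: it is quoted as a known result, with citations to \cite{pitts2014existence} and \cite{schoen1981regularity}, and no argument is supplied. Your outline is a faithful summary of the standard Almgren--Pitts scheme combined with Schoen--Simon regularity, which is precisely what those references contain, so there is nothing to compare against and your proposal is appropriate as a sketch of the cited proof.
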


The characterization of these hypersurfaces has been studied over the last years. Xin Zhou characterized the min-max hypersurface when $Ric_{g}>0$, first for  $2\leq n \leq 6$ and then for $n\geq 7$, after work of Marques-Neves \cite{marques2012rigidity} when $n=2$.

\begin{theorem}\cite{zhou2017min}
\label{xin2018}
Let $(M^{n+1},g)$ be any $(n+1)$ dimensional connected, closed, orientable Riemannian manifold with positive Ricci curvature. Then the min-max hypersurface $\Sigma$ is either:

\begin{itemize}
\item orientable of multiplicity one, with Morse index one and $\mathcal{H}^{n}(\Sigma)=A_{M}$;
\item or non-orientable of multiplicity two with $2\mathcal{H}^{n}(\Sigma)=A_{M}$
\end{itemize}
where

\begin{center}
$\displaystyle A_{M} =\inf_{\sigma \in S}\left\{ \begin{array}{lcc}
              \mathcal{H}^{n}(\sigma) &  {\rm if} \hspace{0.1cm} \Sigma \hspace{0.1cm} {\rm is} \hspace{0.1cm} {\rm orientable}\\
             2\mathcal{H}^{n}(\sigma) &  {\rm if} \hspace{0.1cm} \Sigma \hspace{0.1cm} {\rm is}\hspace{0.1cm}  {\rm non} \hspace{0.1cm} {\rm orientable}
           \end{array} 
   \right.$
\end{center} 
and $S$ is the set of singular hypersurfaces $\sigma$ with a singular set of Hausdorff co-dimension no less than $7$ (when $2\leq n \leq 6$, we can take $\sigma$ to be smooth) such that $\bar{\sigma}$ is connected, closed and minimal. 
\end{theorem}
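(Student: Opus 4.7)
\medskip\noindent\textbf{Proof plan.}

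The plan is to combine three ingredients: Frankel's property in $\mathrm{Ric}>0$, the Morse index upper bound from min-max theory, and the nonexistence of closed stable two-sided minimal hypersurfaces under $\mathrm{Ric}>0$.

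First I would reduce to a single component of the varifold $V$ produced by the Pitts--Schoen--Simon theorem stated above. A Frankel-type result in positive Ricci curvature, valid even in the presence of a small singular set (via second variation of length along a minimizing geodesic between two disjoint closed minimal hypersurfaces), shows that the components $\Sigma_i$ cannot be pairwise disjoint unless $\ell=1$. Hence $V=m[\Sigma]$ for a single connected closed minimal hypersurface $\Sigma$ with singular set of Hausdorff codimension at least $7$, and I only need to determine $(m,\Sigma)$ according as $\Sigma$ is two-sided (orientable) or one-sided (non-orientable) in $M$.

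In the orientable case, the Morse index upper bound from min-max theory (Marques--Neves, extended to higher dimensions) gives $\mathrm{index}(\Sigma)\le 1$, while testing the second variation against the constant function gives $\int_\Sigma(\mathrm{Ric}(\nu,\nu)+|A|^2)\le 0$ if $\Sigma$ is stable, contradicting $\mathrm{Ric}>0$. So $\mathrm{index}(\Sigma)=1$. To upgrade this to multiplicity $m=1$ I invoke the multiplicity-one theorem: if $m\ge 2$, a lowest-eigenvalue Jacobi field of $\Sigma$ produces a normal deformation of the sweepout whose top area drops strictly below $m\,\mathcal{H}^n(\Sigma)=L(\Pi)$, a contradiction. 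The main obstacle here, especially in dimension $n\ge 7$, is performing this deformation across the singular set $\mathrm{sing}(\Sigma)$; it requires a careful cut-and-paste argument controlled by the codimension-$7$ regularity.

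In the non-orientable case, passing to the orientation double cover $\pi:\tilde\Sigma\to\Sigma$ (two-sided in $M$ with area $2\mathcal{H}^n(\Sigma)$), a mod-$2$ parity argument on the flat chain representation of the sweepout forces $m$ to be even, and optimality of $L(\Pi)$ rules out $m\ge 4$, leaving $m=2$. Finally, the identification of $L(\Pi)$ with $A_M$ is a two-sided comparison: for any $\sigma\in S$, a sweepout through $\sigma$ has top area $\mathcal{H}^n(\sigma)$ or $2\mathcal{H}^n(\sigma)$ depending on orientability, giving $L(\Pi)\le A_M$; while $\Sigma$ (or $\tilde\Sigma$) itself belongs to $S$, giving the reverse inequality.
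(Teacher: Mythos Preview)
This theorem is not proved in the present paper at all: it is quoted, with citation, from Zhou's work \cite{zhou2017min}, and is used here only as input to the proof of Theorem~\ref{maintheorem}. So there is no ``paper's own proof'' to compare your proposal against.

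That said, your outline is broadly faithful to the strategy in \cite{zhou2017min}: Frankel in $\mathrm{Ric}>0$ forces a single component, instability of two-sided minimal hypersurfaces gives the lower index bound, and the identification $L(\Pi)=A_M$ comes from the two-sided comparison you describe. The place where your sketch diverges most from Zhou's actual argument is the multiplicity step. You propose to rule out $m\ge 2$ in the orientable case by deforming the sweepout along a first Jacobi eigenfunction; Zhou instead builds an explicit sweepout of $M$ by the (signed) distance level sets of $\Sigma$, whose maximal area is exactly $\mathcal{H}^n(\Sigma)$ by the Ricci comparison (cf.\ Proposition~\ref{prop3.4xin2018} and Remark~\ref{cota}), yielding $L(\Pi)\le \mathcal{H}^n(\Sigma)$ and hence $m=1$ immediately. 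The same distance sweepout, in the non-orientable case, gives $L(\Pi)\le 2\mathcal{H}^n(\Sigma)$ and hence $m\le 2$; the exclusion of $m=1$ there is indeed a parity/current-theoretic point as you indicate. Your Jacobi-field route is more delicate (you yourself flag the difficulty across $\mathrm{sing}(\Sigma)$) and is not needed once one has the distance sweepout, which is also the object the present paper goes on to modify.
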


\begin{remark}\label{J} In the proof of the last theorem we can see that $L(\Pi)=\mathcal{H}^{n}(\Sigma)$, if $\Sigma$ is orientable or $L(\Pi)=2\mathcal{H}^{n}(\Sigma)$, if $\Sigma$ is non-orientable. 
\end{remark}

Later on,  Ketover, Marques and Neves \cite{ketover2016catenoid} ruled out the second possibility in the last theorem when $3\leq n+1 \leq 7$ 

\begin{theorem}\cite{ketover2016catenoid}
\label{coda2016}
For $3\leq n+1 \leq 7$, the Almgren-Pitts width of an orientable Riemannian manifold $M^{n+1}$ with positive Ricci curvature is achieved by an orientable index $1$ minimal hypersurface with multiplicity $1$. 
\end{theorem}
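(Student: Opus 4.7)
The plan is to use Theorem \ref{xin2018} and rule out the non-orientable, multiplicity-two alternative for $\Sigma$. By Remark \ref{J}, assuming this alternative gives $L(\Pi)=2\mathcal{H}^{n}(\Sigma)$, so it suffices to exhibit a sweepout $\Lambda\in\Pi$ with $\max_{s}\mathcal{H}^{n}(\Lambda(s))<2\mathcal{H}^{n}(\Sigma)$, contradicting the definition of $L(\Pi)$. Because $M^{n+1}$ is orientable, a non-orientable embedded $\Sigma$ must be one-sided, so a tubular neighborhood $T_{\varepsilon}(\Sigma)$ is a twisted interval bundle whose boundary $\partial T_{\varepsilon}(\Sigma)$ is the connected orientable double cover of $\Sigma$; for $\varepsilon$ small, $\mathcal{H}^{n}(\partial T_{\varepsilon}(\Sigma))$ is arbitrarily close to $2\mathcal{H}^{n}(\Sigma)$.

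The construction of $\Lambda$ proceeds in two pieces. The first is a continuous family $\{\Gamma_{t}\}_{t\in[0,1]}$ in $Z_{n}(M)$ supported in $T_{\varepsilon}(\Sigma)$, with $\Gamma_{0}=\emptyset$ and $\Gamma_{1}=\partial T_{\varepsilon}(\Sigma)$. The naive attempt, which first passes through the multiplicity-two configuration $2[\Sigma]$ and then through the level sets of the Fermi coordinate, has maximum mass exactly $2\mathcal{H}^{n}(\Sigma)$. The \emph{catenoid estimate} replaces the configuration $2[\Sigma]$ with a one-parameter family in which the two local sheets are joined by a thin catenoidal neck modeled on a rescaled Euclidean catenoid, and then the neck is opened up smoothly. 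A direct computation in Fermi coordinates, sharp because the ambient dimension satisfies $n+1\leq 7$ so that the catenoidal cross-section contributes only lower-order area, yields
\begin{equation*}
\max_{t\in[0,1]}\mathcal{H}^{n}(\Gamma_{t})\leq 2\mathcal{H}^{n}(\Sigma)-\eta
\end{equation*}
for some $\eta>0$ once $\varepsilon$ is sufficiently small.

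The second piece is a path from $\partial T_{\varepsilon}(\Sigma)$ to the zero cycle, supported in $M\setminus T_{\varepsilon}(\Sigma)$. Since $\Sigma$ is one-sided, this complement is connected with single boundary $\partial T_{\varepsilon}(\Sigma)$, and Theorem \ref{xin2018} identifies $2\mathcal{H}^{n}(\Sigma)$ with the infimum $A_{M}$ over minimal competitors; so a Morse-theoretic sweepout by level sets of the distance to $\partial T_{\varepsilon}(\Sigma)$, together with the area comparison available under $\mathrm{Ric}_{g}>0$, produces such a path with maximum mass strictly below $2\mathcal{H}^{n}(\Sigma)$. Concatenating the two pieces yields a continuous map $\Lambda:[0,1]\to Z_{n}(M^{n+1})$ with $\max_{s}\mathcal{H}^{n}(\Lambda(s))<2\mathcal{H}^{n}(\Sigma)$, and verifying $\Lambda\in\Pi$ reduces to a standard Almgren interpolation once one observes that $\Lambda$ and $\Phi$ both sweep $M$ once in the relative cycle sense. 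This yields the desired contradiction, forcing the orientable, multiplicity-one, index-one alternative of Theorem \ref{xin2018}.

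The principal obstacle is producing the strict improvement $\eta>0$ in the catenoid estimate: one must choose the neck radius and the shape of the interpolating family by solving a sharp model problem for catenoids in Euclidean space and then transferring the construction back to $M^{n+1}$ through a Fermi parametrization of the twisted bundle $T_{\varepsilon}(\Sigma)$, where the non-trivial monodromy around $\Sigma$ must be respected. A secondary difficulty, smaller but not automatic, is the verification that the constructed $\Lambda$ lies in the homotopy class $\Pi$ rather than merely representing the right relative homology class, which requires checking the no-concentration-of-mass condition in the Almgren--Pitts framework along the concatenation.
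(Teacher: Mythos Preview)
The paper does not supply its own proof of this statement: Theorem~\ref{coda2016} is quoted from \cite{ketover2016catenoid} and invoked as a black box in the first line of the proof of Theorem~\ref{maintheorem} (``Because of Theorem~\ref{coda2016} we can assume $n\geq 7$''). Your outline is, in broad strokes, the strategy of \cite{ketover2016catenoid}: assume the non-orientable alternative of Theorem~\ref{xin2018}, build a sweepout whose maximal slice has area strictly below $2\mathcal{H}^{n}(\Sigma)$ by opening a neck in the double cover, and contradict the definition of the width.

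There is, however, a genuine misconception in your sketch. You write that the catenoid computation is ``sharp because the ambient dimension satisfies $n+1\leq 7$ so that the catenoidal cross-section contributes only lower-order area.'' This is not why the range $3\leq n+1\leq 7$ appears. The upper bound $n+1\leq 7$ in \cite{ketover2016catenoid} is inherited from the regularity theory behind Theorem~\ref{xin2018} (for $n+1\leq 7$ the min-max hypersurface is smooth), not from any dimensional sensitivity of the neck area. In fact the present paper makes precisely the opposite point: its proof of Theorem~\ref{maintheorem} shows that for $n\geq 3$ a \emph{straight cylinder} already suffices---the decisive step (\ref{4.10})--(\ref{4.11}) is the inequality $-Ah^{2}+Bh^{n}\leq -\tfrac{A}{2}h^{2}$ for small $h$, which requires only $n>2$---and the paper explicitly remarks that ``When $n=2$ it is necessary to consider the catenoid estimate.'' So the catenoid is needed exactly at the \emph{lower} end $n+1=3$, and the upper bound $7$ plays no role in the area estimate itself.

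A smaller point: your ``second piece,'' the path from $\partial T_{\varepsilon}(\Sigma)$ to the zero cycle via distance level sets with strict area drop under $\mathrm{Ric}_g>0$, is exactly Proposition~\ref{prop3.4xin2018} together with the bound of Remark~\ref{cota}; you should invoke it directly rather than re-derive it.
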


In this paper we complete the characterization by showing that orientability also holds in high dimensions. Our main result is:

\begin{theorem}
\label{ours}
The Almgren-Pitts width of an orientable Riemannian manifold $M^{n+1}$ with positive Ricci curvature is achieved by an orientable index $1$ minimal hypersurface with multiplicity $1$ and optimal regularity.
\end{theorem}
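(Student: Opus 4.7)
The plan is to rule out the non-orientable alternative left open by Theorem~\ref{xin2018}. Suppose, for contradiction, that the width $L(\Pi)$ is attained by a stationary varifold $2|\Sigma|$ with $\Sigma$ a non-orientable (possibly singular) minimal hypersurface of multiplicity two, so by Remark~\ref{J} we have $L(\Pi)=2\mathcal{H}^n(\Sigma)$. The goal is to construct a competitor $\Lambda\in\Pi$ whose maximum mass is strictly less than $L(\Pi)$, contradicting the infimum in the definition of $L(\Pi)$. The construction will follow the catenoid-type deformation of Ketover--Marques--Neves~\cite{ketover2016catenoid}, but must be adapted to the presence of the singular set of $\Sigma$.

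Since $M$ is orientable but $\Sigma$ is not, the normal bundle over the regular part of $\Sigma$ is a twisted line bundle, and the two-sided double cover $\pi:\tilde\Sigma\to\Sigma$ embeds as two parallel sheets $\Sigma_{\pm\epsilon}$ at signed distance $\pm\epsilon$ from $\Sigma$. The orientable hypersurface $\tilde\Sigma_\epsilon:=\Sigma_{\epsilon}\cup\Sigma_{-\epsilon}$ converges as a varifold to $2|\Sigma|$ as $\epsilon\to 0$. Attaching catenoidal necks to $\tilde\Sigma_\epsilon$ along the regular part of $\Sigma$ yields a continuous one-parameter family $\{\Gamma_t\}_{t\in[0,1]}$ of orientable integral cycles interpolating between $\tilde\Sigma_\epsilon$ and the zero cycle. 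The catenoid estimate of~\cite{ketover2016catenoid} then gives $\sup_t \mathcal{H}^n(\Gamma_t)\leq 2\mathcal{H}^n(\Sigma)-\delta$ for some uniform $\delta>0$, provided the construction is carried out on the smooth part of $\Sigma$.

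The main obstacle is extending this deformation to the singular setting: when $n+1\geq 8$, $\mathrm{Sing}(\Sigma)$ may have Hausdorff dimension up to $n-7$, so the catenoid necks cannot be attached globally. For $\rho>0$, let $T_\rho$ denote a $\rho$-tubular neighborhood of $\mathrm{Sing}(\Sigma)$ and choose a cutoff $\chi_\rho:\Sigma\to[0,1]$ supported outside $T_\rho$ and equal to $1$ outside $T_{2\rho}$. The necks are attached only on $\{\chi_\rho=1\}$, while $\tilde\Sigma_\epsilon$ is left undisturbed over $T_\rho$. The key estimate, which follows from $\dim_{\mathcal{H}}\mathrm{Sing}(\Sigma)\leq n-7$ together with monotonicity and the stability-based curvature bounds of Schoen--Simon~\cite{schoen1981regularity}, is that $\mathcal{H}^n(\Sigma\cap T_\rho)\to 0$ as $\rho\to 0$. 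For $\rho$ chosen sufficiently small compared to the neck scale, the modified family $\{\Gamma_t^\rho\}$ still satisfies $\sup_t \mathcal{H}^n(\Gamma_t^\rho)<2\mathcal{H}^n(\Sigma)$. Splicing $\{\Gamma_t^\rho\}$ into the original sweepout $\Phi$ via Pitts' discretization--interpolation machinery then produces the desired $\Lambda\in\Pi$, completing the contradiction; orientability, multiplicity one, and index one for the resulting min-max hypersurface then follow from Theorem~\ref{xin2018}.

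The hardest step is making the catenoid estimate quantitative and uniform in $\rho$: one must verify that the neck scale can be chosen independently of the cutoff parameter $\rho$, and that the area retained inside $T_\rho$ is of strictly lower order than the area savings produced by the necks. This requires both sharp control on the area concentration of $\Sigma$ near $\mathrm{Sing}(\Sigma)$ and an explicit lower bound for the mass deficit of a single catenoid neck that depends only on the extrinsic geometry of $\Sigma$ at its smooth points.
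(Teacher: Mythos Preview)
Your overall strategy---assume the width is realized by a doubled non-orientable $\Sigma$ and build a competitor sweepout with strictly smaller maximal area---matches the paper's. The execution, however, is quite different, and the paper's route is considerably simpler than the one you outline.

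The paper does \emph{not} attempt a global neck construction with cutoffs near $\mathrm{Sing}(\Sigma)$. It works entirely at a \emph{single} regular point $p\in\Sigma$, chosen so that $B_R(p)\cap\Sigma$ is disjoint from the singular set, and it replaces the catenoidal neck by a straight \emph{cylinder} $C_{t,h}(p)$ of radius $t$ and height $h$. Starting from Zhou's distance sweepout $\{\Sigma_h\}$, which already satisfies $\mathcal{H}^n(\Sigma_h)\le 2\mathcal{H}^n(\Sigma)-Ah^2$, one removes the two caps $B_{t,h}(p)$ and inserts the cylinder, obtaining via Euclidean comparison
\[
\mathcal{H}^n(\Lambda_{h,t})\le 2\mathcal{H}^n(\Sigma)-Ah^2+Cht^{n-1}-ct^n.
\]
The maximum over $t$ of $Cht^{n-1}-ct^n$ is $Bh^n$, and since $n\ge 7>2$ one has $Bh^n=o(h^2)$; the quadratic gain from positive Ricci absorbs the cylinder cost outright, so the catenoid estimate is not needed in these dimensions. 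Opening the cylinder to a fixed radius $R$ and then sending $h\to 0$ produces a definite drop $-\tfrac{c}{2}R^n$ independent of $h$, and concatenating the three pieces $\{\Lambda_{h,R}\}_{h=0}^{\delta_0}$, $\{\Lambda_{\delta_0,t}\}_{t=0}^{R}$, $\{\Sigma_r\}_{r=\delta_0}^{1}$ gives the competitor.

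Consequently, what you identify as the ``main obstacle''---making the catenoid estimate uniform in a cutoff parameter $\rho$ near $\mathrm{Sing}(\Sigma)$---is an artifact of globalizing the neck attachment; the paper simply avoids the singular set by choosing $p$ smooth and modifies Zhou's already-valid sweepout locally. Your sketch also leaves genuine gaps: it is not explained how a family left ``undisturbed over $T_\rho$'' can still interpolate to the zero cycle, nor why a single layer of necks on $\tilde\Sigma_\epsilon$ would by itself yield a path in $\Pi$ rather than just a deformation of one slice. Both issues disappear in the paper's approach because the modification is performed inside an existing sweepout rather than built from scratch.
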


\begin{remark}
If $M$ is non-orientable the Theorem \ref{ours} is true with orientable index 1 changed by $2-$sided index 1 and considering the Almgren-Pitts width with coefficients in $\mathbb{Z}_{2}$.
\end{remark}
The proof of Theorem \ref{ours} will be carried out by contradiction. By Theorem \ref{xin2018} and  Remark \ref{J} suppose the min-max hypersurface $\Sigma$ is non-orientable and $L(\Pi)=2\mathcal{H}^{n}(\Sigma)$. It will suffice to show the existence of a sweepout $\{\Lambda_{s}\}_{s=0}^{1}$ in $M^{n+1}$ such that

\begin{center}
 $\displaystyle\sup_{s\in[0,1]}\mathcal{H}^{n}(\Lambda_{s})< 2\mathcal{H}^{n}(\Sigma)$.
\end{center}
 We will start with the family given by Xin Zhou (see Theorem \ref{prop3.4xin2018}). We will modify it as in \cite{ketover2016catenoid} to obtain the desired sweepout.

In fact, the family will begin in the same way that Xin Zhou began the distance family $\{\Sigma_{t}\}$  until an appropriate time $\delta_{0}$.
At the fixed moment $\delta_{0}$, a cylinder of constant height $\delta_{0}$ will be opened up to an appropriate radius $ R $. Then,  with constant radius $ R $ the family given by Zhou will be continued but at every moment $ h $ the cylinder of radius $R$ and height $ h $ will appear and the interior of the intersection between the cylinder mentioned above and $\Sigma_{h}$ will be removed.

\textbf{Acknowledgments}
This paper was made possible thanks to a PhD scholarship (IMU Breakout Graduate Fellowship) from IMU and TWAS to the author. I  am very grateful for the excellent direction of Professor Fernando Cod\'a Marques.  This work was done while the author was visiting Princeton University as a VSRC. I am grateful to Princeton University for the hospitality. I am also thankful to the Department of Mathematics at Universidad del Valle for partial support my visit to Princeton.

\section{Preliminary concepts and theorems}
In this section we give some concepts and theorems needed along the paper. Let $(M^{n+1},g)$ be a connected, closed, orientable Riemannian manifold and $\Sigma$ a closed, embedded, orientable hypersurface in $M^{n+1}$ with singular set of Hausdorff co-dimension no less than $7$. We consider $M^{n+1}$  embedded in $\mathbb{R}^{N}$ for some $N$.

\begin{notation}
The following notations and definitions are given by Xin Zhou in section 2 in \cite{zhou2017min}.

\begin{itemize}
\item $C(M)$ is the space of sets $\Omega \subset M$ with finite perimeter.
\item $[[\Omega ]]$ is the corresponding integral current with the natural orientation when $\Omega \in C(M)$ or 
$\Omega$ is a closed, orientable hypersurface in $M$ with a singular set of Hausdorff dimension no larger that $n-7$. 
\item $\mathcal{M}$ and $\mathcal{F}$ are the mass norm and flat norm of the space of $k-$dimensional integral currents in $\mathbb{R}^{N}$ with support in $M^{n+1}$, respectively.

\end{itemize}
\end{notation}

\begin{notation}Let us denote,
\begin{itemize}
\item
$B_{t}(p):=\{x\in M; d(p,x)<t\}$.

\item
$\displaystyle C_{t,h}(p):=\{exp_{q}(s\eta(q)); q\in  \partial B_{t}(p)\cap\Sigma ; s \in [-h,h] \} $

\item $\displaystyle B_{t,h}(p):= \{exp_{q}(h\eta(q)):q\in B_{t}(p)\cap \Sigma \}\cup \{exp_{q}(-h\eta(q)):q\in B_{t}(p)\cap \Sigma \} $

\end{itemize}
where $\eta$ is a unit normal vector field defined in a neighborhood of $p$ in the regular part of $\Sigma$.

\end{notation}

\begin{definition}
If $\Sigma$ has singularities let us denote by $Sing(\Sigma)$ the set of singularities of $\Sigma$ and $S$ the set of connected, closed, minimal, hypersurfaces $\Sigma$ of $M^{n+1}$ with $Sing(\Sigma)$ of codimension no less than $7$. We say $\Sigma \in S$ has optimal regularity.  

\end{definition}
The following result was proven in \cite{zhou2017min} (see Proposition 3.6 of \cite{zhou2017min}). Let $\Sigma \in S$ be a non-orientable hypersurface in $M$. Consider the distance family  $\{\Sigma_{t}\}$ given by

\begin{center}
$\Sigma_{t}:=\{x\in M^{n+1}: dist(x,\Sigma)=t\}$.
\end{center}
where $dist(x,\Sigma)$ is the non-signed distance function.

\begin{proposition}\cite{zhou2017min}
\label{prop3.4xin2018}
Assume that $Ric_{g}>0$. For any $\Sigma \in S$ non-orientable, the distance family $\{\Sigma_{t}\}_{t\in [0,d(M)]}$ satisfies:

\begin{itemize}
\item $\Sigma_{0}=\Sigma$
\item $\mathcal{H}^{n}(\Sigma_{t})< 2\mathcal{H}^{n}(\Sigma)$, for all $0< t \leq d(M)$
\item When $t\rightarrow 0$, $\mathcal{H}^{n}(\Sigma_{t})\rightarrow 2\mathcal{H}^{n}(\Sigma)$, and $\Sigma_{t}$ converge smoothly to a double cover of $\Sigma$ in any open set $U\subset M\setminus Sing(\Sigma)$ with compact closure $\bar{U}$.
\end{itemize}
where $d(M)$ is the diameter of $M^{n+1}$.
\end{proposition}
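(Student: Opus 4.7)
My plan is to handle the three items in sequence. The first bullet, $\Sigma_0=\Sigma$, is immediate from the definition of the distance function. For the remaining two, the essential construction is the orientation double cover of the regular part of $\Sigma$: because $M$ is orientable while $\Sigma$ is non-orientable, the normal line bundle of $\Sigma_{\mathrm{reg}}$ in $M$ is non-trivial, so its orientation cover $\pi\colon\tilde\Sigma_{\mathrm{reg}}\to\Sigma_{\mathrm{reg}}$ carries a globally defined unit normal $\tilde\eta$ into $M$. I would parametrize the nearby distance family via the normal exponential map
$$F(\tilde p,t):=\exp_{\pi(\tilde p)}\bigl(t\,\tilde\eta(\tilde p)\bigr).$$
For small $t>0$ the two preimages of each $p\in\Sigma_{\mathrm{reg}}$ under $\pi$ are sent by $F(\cdot,t)$ to two distinct points of $\Sigma_t$ (because $\Sigma$ is $1$-sided), so $F(\cdot,t)$ is injective onto $\Sigma_t$ away from a neighborhood of $\mathrm{Sing}(\Sigma)$.

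For the third bullet, I would observe that the Jacobian $J_t$ of $F(\cdot,t)$ tends to $1$ smoothly on each compact subset of $\tilde\Sigma_{\mathrm{reg}}$, immediately yielding smooth convergence of $\Sigma_t$ to the double cover of $\Sigma$ on any open set $U\subset M\setminus\mathrm{Sing}(\Sigma)$ with compact closure. To upgrade this to the area limit $\mathcal{H}^n(\Sigma_t)\to 2\mathcal{H}^n(\Sigma)$, I would exhaust $\tilde\Sigma_{\mathrm{reg}}$ by compact sets $\tilde K_\varepsilon$ with $\mathcal{H}^n(\tilde\Sigma\setminus\tilde K_\varepsilon)<\varepsilon$ (possible because $\mathrm{Sing}(\Sigma)$ has Hausdorff codimension $\geq 7$ and hence is $\mathcal{H}^n$-negligible), and control the leftover $\mathcal{H}^n(\Sigma_t\setminus F(\tilde K_\varepsilon,t))$ uniformly in $t$ by a tubular-neighborhood estimate around $\mathrm{Sing}(\Sigma)$.

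For the strict inequality in the second bullet, I would use a Heintze-Karcher bound on $J_t$ driven by minimality and positive Ricci. Along each normal geodesic $t\mapsto F(\tilde p,t)$ the shape operator of $\Sigma_t$ satisfies $A_t'=-A_t^2-R(\cdot,\dot\gamma)\dot\gamma$; tracing and using $|A_t|^2\geq H_t^2/n$ produces the Riccati inequality
$$H_t'\leq -\frac{H_t^2}{n}-\kappa_0,$$
where $\kappa_0>0$ is a uniform lower Ricci bound. Since $\Sigma$ is minimal, $H_0=0$, and comparison with the ODE solution $-\sqrt{n\kappa_0}\tan(\sqrt{\kappa_0/n}\,t)$ together with $(\log J_t)'=H_t$ would yield the pointwise bound $J_t(\tilde p)\leq\bigl(\cos(\sqrt{\kappa_0/n}\,t)\bigr)^n$ on the pre-focal locus, with $J_t$ extended by $0$ past any focal time. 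Every $y\in\Sigma_t$ has a minimizing foot on $\Sigma_{\mathrm{reg}}$, and the lift $\tilde p$ of that foot satisfies $t\leq t^*(\tilde p)$ (no focal point has occurred along the minimizing segment), so the area formula applied to $F(\cdot,t)$ would give
$$\mathcal{H}^n(\Sigma_t)\leq\int_{\tilde\Sigma_{\mathrm{reg}}}J_t(\tilde p)\,d\mathcal{H}^n \leq 2\mathcal{H}^n(\Sigma)\bigl(\cos(\sqrt{\kappa_0/n}\,t)\bigr)_+^n<2\mathcal{H}^n(\Sigma)$$
for every $t\in(0,d(M)]$, with strictness coming from $\cos(\sqrt{\kappa_0/n}\,t)<1$ whenever $t>0$, and Bonnet--Myers ensuring $d(M)$ lies in the valid range of the comparison.

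The hardest step will be justifying the area formula across both $\mathrm{Sing}(\Sigma)$ and the focal/cut locus of $F$, where $\Sigma_t$ may fail to be a smooth submanifold. I would handle this by working throughout with the Lipschitz distance function $f(x)=\mathrm{dist}(x,\Sigma)$, using $|\nabla f|=1$ almost everywhere, applying the coarea formula to relate $\mathcal{H}^n(\Sigma_t)$ to $J_t$ on the regular locus, and invoking the codimension-$7$ regularity of $\mathrm{Sing}(\Sigma)$ to absorb the exceptional sets as $\mathcal{H}^n$-null.
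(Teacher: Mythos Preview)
The paper does not supply its own proof of this proposition; it is quoted from Zhou, and the accompanying remark merely extracts from Zhou's argument the quantitative bound $\mathcal{H}^n(\Sigma_t)\leq 2\cos^n(\sqrt{\kappa}\,t)\,\mathcal{H}^n(\Sigma)$ for $Ric_g\geq n\kappa$, together with the observation that $\mathrm{dist}(x,\bar\Sigma)$ is always realized at a regular point. Your outline is correct and reproduces exactly this estimate via the standard Heintze--Karcher/Riccati comparison on the orientation double cover, which is the argument behind Zhou's result; in particular your $(\cos(\sqrt{\kappa_0/n}\,t))^n$ with $\kappa_0=n\kappa$ is the same bound.

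One small imprecision: Bonnet--Myers only gives $d(M)\leq \pi/\sqrt{\kappa}$, which is twice the interval on which the comparison cosine stays positive, so it does not by itself place $d(M)$ ``in the valid range.'' This is harmless, though, because your own Riccati comparison already forces the first focal time along any normal geodesic to be at most $\pi/(2\sqrt{\kappa})$; hence every minimizing segment from a point of $\Sigma_t$ to its regular foot has length at most $\pi/(2\sqrt{\kappa})$, so $\Sigma_t=\emptyset$ for $t>\pi/(2\sqrt{\kappa})$ and your inequality with the positive part $(\cdot)_+^n$ holds trivially there.
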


\begin{remark} Notice that $dist(x,\bar{\Sigma})$ is achieved by a regular point of $\Sigma$. Also, in the proof of the last proposition we have the estimate
\label{cota}
\begin{equation}
\mathcal{H}^{n}(\Sigma_{t})\leq 2\cos^{n}(\sqrt{\kappa}t)\mathcal{H}^{n}(\Sigma),
\end{equation}
where $\kappa$ is a constant such that $Ric_{g}\geq n \kappa$.
\end{remark}

\begin{remark}
\label{scale}
Using a change of variables, we can consider $\{\Sigma_{t}\}_{t=0}^{d(M)}$ in the interval $[0,1]$ ($\{\Sigma_{s}\}_{s=0}^{1}$).
\end{remark}

\section{MAIN RESULT}
The reader can find in Section 4 of \cite{zhou2017min} the precise definitions of width and sweepouts in Almgren-Pitts min-max theory. 

\begin{theorem}
\label{maintheorem}
The Almgren-Pitts width of an orientable closed Riemannian manifold $M^{n+1}$ with positive Ricci curvature is achieved by an orientable index $1$ minimal hypersurface with multiplicity $1$ and optimal regularity.
\end{theorem}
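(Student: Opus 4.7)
The plan is to prove Theorem \ref{maintheorem} by contradiction, following the construction scheme of Ketover-Marques-Neves \cite{ketover2016catenoid} and adapting it to the (possibly singular) setting of Zhou. Combining Theorem \ref{xin2018} with Remark \ref{J}, if the conclusion fails then there is a non-orientable $\Sigma\in S$ with $L(\Pi)=2\mathcal{H}^n(\Sigma)$, and it suffices to exhibit an admissible sweepout $\{\Lambda_s\}_{s\in[0,1]}\in\Pi$ with $\sup_{s\in[0,1]}\mathcal{H}^n(\Lambda_s)<2\mathcal{H}^n(\Sigma)$, contradicting the definition of $L(\Pi)$.

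The starting point is Zhou's distance sweepout $\{\Sigma_t\}_{t\in[0,d(M)]}$ from Proposition \ref{prop3.4xin2018}; by Remark \ref{cota} the only problematic portion is a neighborhood of $t=0$, where the area approaches $2\mathcal{H}^n(\Sigma)$. I fix a regular point $p\in\Sigma$, which exists since $Sing(\Sigma)$ has Hausdorff codimension $\geq 7$, so locally near $p$ the tube $\Sigma_t$ splits into two smooth sheets $\exp_q(\pm t\eta(q))$ for $q\in B_{r_0}(p)\cap\Sigma$. Then I build $\Lambda_s$ in three stages: (i) for $t\in[0,\delta_0]$ set $\Lambda_t=\Sigma_t$; (ii) at the fixed time $t=\delta_0$, interpolate in a parameter $\tau\in[0,R]$ via
\[
\Lambda_{\delta_0,\tau}=\bigl(\Sigma_{\delta_0}\setminus B_{\tau,\delta_0}(p)\bigr)\cup C_{\tau,\delta_0}(p),
\]
which removes the two local caps above and below $p$ and replaces them with the cylindrical neck of constant height $\delta_0$; (iii) for $t\in[\delta_0,d(M)]$, keep the cylinder at constant radius $R$ but let its height track $t$:
\[
\Lambda_t=\bigl(\Sigma_t\setminus B_{R,t}(p)\bigr)\cup C_{R,t}(p).
\]
After reparametrization via Remark \ref{scale} the family lives on $[0,1]$.

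The crucial area estimate is at stage (ii). Using the approximations $\mathcal{H}^n(B_{\tau,\delta_0})\approx 2\omega_n\tau^n$ and $\mathcal{H}^n(C_{\tau,\delta_0})\approx 2n\omega_n\tau^{n-1}\delta_0$, the function $\tau\mapsto -2\omega_n\tau^n+2n\omega_n\tau^{n-1}\delta_0$ attains its maximum at $\tau_*=(n-1)\delta_0$, with value $2\omega_n(n-1)^{n-1}\delta_0^n$; hence the worst area along stage (ii) is bounded by
\[
\mathcal{H}^n(\Sigma_{\delta_0})+2\omega_n(n-1)^{n-1}\delta_0^n\leq 2\mathcal{H}^n(\Sigma)\cos^n(\sqrt{\kappa}\delta_0)+2\omega_n(n-1)^{n-1}\delta_0^n,
\]
by Remark \ref{cota}. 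Since $2\mathcal{H}^n(\Sigma)(1-\cos^n(\sqrt{\kappa}\delta_0))=n\kappa\delta_0^2\mathcal{H}^n(\Sigma)+O(\delta_0^4)$ dominates the additive term $2\omega_n(n-1)^{n-1}\delta_0^n$ whenever $n\geq 3$ and $\delta_0$ is sufficiently small, the strict inequality $<2\mathcal{H}^n(\Sigma)$ holds uniformly in $\tau$. Stage (iii) is then controlled by the uniform bound $\mathcal{H}^n(\Sigma_t)<2\mathcal{H}^n(\Sigma)$ on the compact interval $[\delta_0,d(M)]$ together with the estimate $\mathcal{H}^n(C_{R,t})\leq 2n\omega_n R^{n-1}d(M)$, which is made arbitrarily small by shrinking $R$ (after $\delta_0$ has been fixed).

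The main obstacle I anticipate is verifying that $\{\Lambda_s\}$ is actually admissible in the Almgren-Pitts sense: continuity of $s\mapsto\Lambda_s$ in the flat norm, the no-concentration-of-mass condition at $s=0$, and membership in the same homotopy class $\Pi$ as Zhou's family. This demands careful matching between the three stages, regularization of the corners where the cylindrical neck meets the sheets, and a check that the one-parameter opening during stage (ii) yields a flat-continuous family of integral currents. Secondary technical points are keeping $R$ smaller than both the distance from $p$ to $Sing(\Sigma)$ and the focal radius at $p$, so that the exponential normal map is a diffeomorphism throughout, and controlling the appearance of the cylinder at the junction $t=\delta_0$ between stages (ii) and (iii). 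With these pieces in place, the strict inequality $\sup_s\mathcal{H}^n(\Lambda_s)<2\mathcal{H}^n(\Sigma)=L(\Pi)$ contradicts the definition of the width, completing the argument.
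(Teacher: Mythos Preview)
Your overall strategy is right --- reduce to the non-orientable case via Theorem \ref{xin2018} and Remark \ref{J}, then build a competitor sweepout beating $2\mathcal{H}^n(\Sigma)$ by inserting a cylindrical neck into Zhou's distance family --- and your stage (ii) estimate (the $-A\delta_0^2$ versus $+B\delta_0^n$ competition) matches the paper's. But the three stages are assembled in the wrong order, and the resulting family does \emph{not} satisfy $\sup_s\mathcal{H}^n(\Lambda_s)<2\mathcal{H}^n(\Sigma)$.

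The dangerous part of Zhou's family is the interval $t\in(0,\delta_0]$: by Proposition \ref{prop3.4xin2018}, $\mathcal{H}^n(\Sigma_t)\to 2\mathcal{H}^n(\Sigma)$ as $t\to 0^+$, so $\sup_{t\in(0,\delta_0]}\mathcal{H}^n(\Sigma_t)=2\mathcal{H}^n(\Sigma)$. Your stage (i) uses the \emph{unmodified} $\Sigma_t$ on exactly this interval, so already $\sup_s\mathcal{H}^n(\Lambda_s)\geq 2\mathcal{H}^n(\Sigma)$ and no contradiction follows. Conversely, on $[\delta_0,d(M)]$ the area of $\Sigma_t$ is already uniformly below $2\mathcal{H}^n(\Sigma)$ (as you yourself note), so the neck you insert in stage (iii) is doing no work there; worse, for $t$ of order $d(M)$ the cylinder $C_{R,t}(p)$ is not even well-defined, since the normal exponential map ceases to be injective beyond the focal radius.

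The fix is to swap stages (i) and (iii): leave $\Sigma_t$ untouched on $[\delta_0,d(M)]$, open the neck at fixed height $\delta_0$, and then keep the neck at fixed radius $R$ while letting the height run from $\delta_0$ down to $0$. The point of the neck on this last piece is the estimate
\[
\mathcal{H}^n(\Lambda_{h,R})\leq 2\mathcal{H}^n(\Sigma)-Ah^2+ChR^{n-1}-cR^n\leq 2\mathcal{H}^n(\Sigma)-\tfrac{c}{2}R^n
\]
for all $h\in[0,\delta_0]$ once $\delta_0\leq cR/(2C)$: the removed caps dominate the added cylinder by an amount depending only on $R$, so the mass stays uniformly below $2\mathcal{H}^n(\Sigma)$ \emph{even as $h\to 0$}. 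This is exactly the order in the paper's proof.
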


\begin{proof} Because  of  Theorem \ref{coda2016} we can assume $n\geq 7$. We are going to proceed by contradiction. Let $\Sigma^{n}$ be the min-max hypersurface. By Theorem \ref{xin2018} and Remark \ref{J} we can suppose $\Sigma$ is a  closed embedded non-orientable minimal hypersurface such that $L(\Pi)=2\mathcal{H}^{n}(\Sigma)$. (Notice that $\Sigma$ might have a singular set of codimension at least $7$). The idea is to create a sweepout $\{ \Lambda_{s} \}_{s=0}^{1}$ in $M^{n+1}$ such that

\begin{equation}
\sup_{s\in[0,1]}\mathcal{H}^{n}(\Lambda_{s})< 2\mathcal{H}^{n}(\Sigma),
\end{equation}
which by definition of $L(\Pi)$ would be a contradiction. 

By Proposition \ref{prop3.4xin2018} and Remarks \ref{cota} and \ref{scale}, we have a sweepout $\{\Sigma_{r}\}_{r=0}^{1}$ of $M^{n+1}$ and $\delta>0$ such that $\Sigma_{0}=\Sigma$,

\begin{equation}
\label{control0}
\sup_{r\in[\delta,1]}\mathcal{H}^{n}(\Sigma_{r})< 2\mathcal{H}^{n}(\Sigma)  \hspace{0.3cm}\text{and}\hspace{0.3cm} \mathcal{H}^{n}(\Sigma_{h})\leq 2\mathcal{H}^{n}(\Sigma)-Ah^{2}  \hspace{0.3cm};\forall h\in [0,\delta],
\end{equation}
for some constant $A>0$. We are going to modify $\{\Sigma_{r}\}_{r=0}^{1}$ 
from $r=\delta_{0}$ to $r=0$ for some $\delta_{0}\leq \delta$ that will be specified later. At the instant $r=\delta_{0}$ we will start to open up a cylinder of height $\delta_{0}$ in a point $p\in \Sigma$ (away from the singularities of $\Sigma$) of radius $t=0$ to $t=R$, for an $R$ that will be given below.
We have the following Euclidean comparisons. There exists an $R>0$ so that for any $p\in \Sigma$ and $t\leq R$ there holds

\begin{equation}
ct^{n}\leq \mathcal{H}^{n}(\Sigma\cap B_{t}(p)) \leq Ct^{n}
\end{equation}
\begin{equation}
ct^{n-1}\leq \mathcal{H}^{n-1}(\Sigma \cap \partial B_{t}(p))\leq Ct^{n-1}.
\end{equation}
Also there exists $h_{0}>0$ so that whenever $h\leq h_{0}$ one has the next area bounds

\begin{equation}
\label{areas1}
cht^{n-1}\leq \mathcal{H}^{n}(C_{t,h}(p)) \leq Cht^{n-1}
\end{equation}
and

\begin{equation}
\label{areas2}
ct^{n}\leq \mathcal{H}^{n}(B_{t,h}(p))\leq Ct^{n}.
\end{equation}

Let $p\in\Sigma$ such that $B_{R}(p)\cap \Sigma$ does not contain singular points. Let us define

\begin{equation}
\label{defnewsurface}
\Lambda_{h,t}:= \{\Sigma_{h} \cup C_{t,h}(p)\}\setminus B_{t,h}(p).
\end{equation}
If follows from equations (\ref{control0}), (\ref{areas1}) and (\ref{areas2})   that

\begin{equation}
\label{4.8}
\mathcal{H}^{n}(\Lambda_{h,t})\leq 2\mathcal{H}^{n}(\Sigma)-Ah^{2}+Cht^{n-1}-ct^{n}.
\end{equation} 
The maximum value of the function $f(t):=Cht^{n-1}-ct^{n}$ is at the point

\begin{equation}
\label{4.9}
t=\frac{Ch(n-1)}{nc}.
\end{equation}
Therefore from inequality (\ref{4.8}) there is a $B>0$ (independent of $t$) such that  for all $t\leq R$

\begin{equation}
\label{4.10}
\mathcal{H}^{n}(\Lambda_{h,t})\leq 2\mathcal{H}^{n}(\Sigma)-Ah^{2}+Bh^{n}.
\end{equation}
Making $h_{0}$ smaller we have that for all $h\leq h_{0}$ and $t\leq R$

\begin{equation}
\label{4.11}
\mathcal{H}^{n}(\Lambda_{h,t})\leq 2 \mathcal{H}^{n}(\Sigma)-\frac{A}{2}h^{2}.
\end{equation}
Notice that in the previous step it was important to have $n>2$. When $n=2$ it is necessary to consider the catenoid estimate \cite{ketover2016catenoid}. 
Moreover, when $t=R$ we obtain from inequality (\ref{4.8})
\begin{equation}
\label{4.12}
\mathcal{H}^{n}(\Lambda_{h,R})\leq 2\mathcal{H}^{n}(\Sigma)-Ah^{2}+ChR^{n-1}-cR^{n}.
\end{equation}
Making $h_{0}$ smaller again so that $h_{0}\leq \frac{cR}{2C}$ we obtain for $h\leq h_{0}$

\begin{equation}
\label{4.13}
\mathcal{H}^{n}(\Lambda_{h,R})\leq 2\mathcal{H}^{n}(\Sigma)-Ah^{2}-\frac{c}{2}R^{n}.
\end{equation}
The last inequality tell us that opening the hole up to time $t=R$ we decrease area by a definite amount depending on $R$ and not on $h$.

Let $\delta_{0}=\min\{\delta, h_{0}\}$. Since $\Lambda_{\delta_{0},0}=\Sigma_{\delta_{0}}$, we can continue our sweepout by concatenating $\{ \Lambda_{\delta_{0},t}\}_{t=0}^{R}$ with  $\{ \Sigma_{r} \}_{r=\delta_{0}}^{1}$. From inequality (\ref{4.11}) we have control of $\mathcal{H}^{n}(\Lambda_{\delta_{0},t})$. In fact, for all $t \leq R$

\begin{equation}
\label{control1}
\mathcal{H}^{n}(\Lambda_{\delta_{0},t})\leq 2\mathcal{H}^{n}(\Sigma)-\frac{A}{2}\delta_{0}^{2}.
\end{equation} 
We continue our sweepout by concatenating $\{\Lambda_{h,R}\}_{h=\delta_{0}}^{0}$ to $\{ \Lambda_{\delta_{0},t}\}_{t=0}^{R}$. From inequality (\ref{4.13}) we also have control of $\mathcal{H}^{n}(\Lambda_{h,R})$. In fact, for all $h\leq \delta_{0}$ 

\begin{equation}
\label{control2}
\mathcal{H}^{n}(\Lambda_{h,R})\leq 2\mathcal{H}^{n}(\Sigma)-\frac{c}{2}R^{n}.
\end{equation}
Let us define the sweepout $\{\Lambda_{s}\}_{s=0}^{1}$ described above:

\begin{center}
$\displaystyle \Lambda_{s}=\left\{ \begin{array}{lcc}
              \Lambda_{3\delta_{0}s,R} &  {\rm if}  & 0\leq s \leq 1/3 \\
              \Lambda_{\delta_{0},-3Rs+2R} &  {\rm if} & 1/3 \leq s \leq 2/3\\
              \Sigma_{3(1-\delta_{0})s+3\delta_{0}-2} &  {\rm if} & 2/3\leq s \leq 1
             \end{array}
   \right.$
\end{center} 
By inequalities (\ref{control0}), (\ref{control1}) and (\ref{control2}), the sweepout $\{\Lambda_{s}\}_{s=0}^{1}$ is such that

\begin{equation}
\sup_{s\in [0,1]}\mathcal{H}^{n}(\Lambda_{s})< 2\mathcal{H}^{n}(\Sigma).
\end{equation}
Contradiction, which finishes the proof of Theorem \ref{maintheorem}.

\end{proof}

\bibliographystyle{plain}
\bibliography{mybib}

\begin{thebibliography}{1}

\bibitem{almgren1965theory}
FJ~Almgren.
\newblock The theory of varifolds. mimeographed notes.
\newblock {\em Princeton University}, 1965.

\bibitem{ketover2016catenoid}
Daniel Ketover, Fernando~C Marques, and Andr{\'e} Neves.
\newblock The catenoid estimate and its geometric applications.
\newblock {\em arXiv preprint arXiv:1601.04514}, 2016.

\bibitem{marques2012rigidity}
Fernando~C Marques and Andr{\'e} Neves.
\newblock Rigidity of min-max minimal spheres in three-manifolds.
\newblock {\em Duke Mathematical Journal}, 161(14):2725--2752, 2012.

\bibitem{pitts2014existence}
Jon~T Pitts.
\newblock {\em Existence and Regularity of Minimal Surfaces on Riemannian
  Manifolds.(MN-27)}.
\newblock Princeton University Press, 2014.

\bibitem{schoen1981regularity}
Richard Schoen and Leon Simon.
\newblock Regularity of stable minimal hypersurfaces.
\newblock {\em Communications on Pure and Applied Mathematics}, 34(6):741--797,
  1981.

\bibitem{zhou2017min}
Xin Zhou.
\newblock Min--max hypersurface in manifold of positive ricci curvature.
\newblock {\em Journal of Differential Geometry}, 105(2):291--343, 2017.

\end{thebibliography}

\end{document}